\theoremstyle{plain}
\newtheorem{prop}{Proposition}[section]
\newtheorem{coro}[prop]{Corollary}
\newtheorem{conj}[prop]{Conjecture}
\newtheorem{lemm}[prop]{Lemma}
\newtheorem{thm}[prop]{Theorem}
\newtheorem{ex}[prop]{Example}
\theoremstyle{definition}
\newtheorem{defn}[prop]{Definition}
\newtheorem{rem}[prop]{Remark}
\DeclareMathOperator{\breadth}{breadth}
\DeclareMathOperator{\maxdeg}{maxdeg}
\DeclareMathOperator{\mindeg}{mindeg}
\def\mcg#1;#2{\Gamma_{#1,#2}}
\def\fg#1;#2{\Pi_{#1,#2}}
\def\tb#1;#2{\mathscr{K}_{\frac{#1}{#2}}}
\begin{document}

\title[On the Jones polynomial of quasi-alternating links, II]
{On the Jones polynomial of quasi-alternating links, II}

\keywords{quasi-alternating, determinant, breadth, gap, Jones polynomial}

\author{Khaled Qazaqzeh}
\address{Current Address: Department of Mathematics, Faculty of Science,  Kuwait
University, P. O. Box 5969 Safat-13060, Kuwait, State of Kuwait}
\email{khaled.qazaqzeh@ku.edu.kw}
\address{Permanent Address: Department of Mathematics, Faculty of Science, Yarmouk University, Irbid, Jordan, 21163}
\email{qazaqzeh@yu.edu.jo}
\urladdr{http://faculty.yu.edu.jo/qazaqzeh}

\author{Ahmad Al-Rhayyel}
\address{Address: Department of Mathematics, Faculty of Science, Yarmouk University, Irbid, Jordan, 21163}
\email{al-rhayyel@yu.edu.jo}
\urladdr{https://faculty.yu.edu.jo/Alrhayyel}

\author{Nafaa Chbili}
\address{Address: Department of Mathematical Sciences, College of Science, UAE University, 15551 Al Ain, U.A.E.}
\email{nafaachbili@uaeu.ac.ae}
\urladdr{http://faculty.uaeu.ac.ae/nafaachbili}

\date{02/08/2023}

\begin{abstract}
We extend a result of Thistlethwaite \cite[Theorem\,1(iv)]{Th} on the structure of the Jones polynomial of alternating links to the wider class of quasi-alternating links. In particular, we prove that the Jones polynomial of any prime quasi-alternating link that is not a $(2,n)$-torus link has no gap. As an application, we show  that the differential grading of the Khovanov homology  of any prime quasi-alternating link that is not a $(2,n)$-torus link has no gap. Also, we show that the determinant is an upper bound for the breadth of the Jones polynomial for any quasi-alternating link. Finally, we prove that the Jones polynomial of any non-prime quasi-alternating link $L$ has more than one gap if and only if $L$  is a connected sum of Hopf links.

\end{abstract}

\maketitle

\section{Introduction}

 Let $f(t)=\sum_{k=n}^{m}a_kt^k$ be a Laurent polynomial with real coefficients such that  $a_n \neq 0$ and $a_m \neq 0$. The nonnegative integer $m-n$ is called the \textsl{breadth} of $f$. We say that  $f(t)$ has a \textsl{gap}  of length $s$ if there exists $n\leq  i_0 < m$ such that   $a_{i_0} \neq 0$ and $a_{i_0+s+1} \neq 0$, while $a_{j}=0$ for all $i_0<j\leq i_{0}+s$.  Given  $g(t)=\sum_{k=n'}^{m'}b_kt^k$  a Laurent polynomial with real  coefficients such that  $a_{n'} \neq 0$ and $a_{m'} \neq 0$. If $n'>m+1$,  then we say that there is a gap  of length $n'-m-1$ between $f$ and $g$. %of one of them is less than the minimum degree of the other  is a sequence of $s$-consecutive monomials of zero coefficients lie in the range between the maximum degree of the first polynomial and minimum degree of the second polynomial.

The study of the  Jones polynomial  of alternating links led  to the proof of longstanding conjectures in knot theory \cite{K, Mu, Th}. In particular,  the independent work of Thistlethwaite \cite{Th}, Kauffman \cite{K} and Murasugi \cite{Mu}, shows  that the breadth of the Jones polynomial of any link is a lower bound of its crossing number and that the equality holds if and only if the link is alternating. If we combine this with the well-known fact  that the determinant of any  non-split alternating link is bigger than or equal to its  crossing number \cite{B}, then we conclude that the breadth of the Jones polynomial of any  non-split alternating link is smaller than or equal to its  determinant. It is worth mentioning here that Thistlethwaite  also proved  that the Jones polynomial of any prime alternating link that is not a $(2,n)$-torus link has no gap and that the coefficients of this  polynomial alternate in sign,  \cite[Theorem\,1]{Th}.

The  class of alternating  links  has been generalized in several directions. A particularly interesting generalization has been obtained by Ozsv$\acute{\text{a}}$th and Szab$\acute{\text{o}}$ while studying the Heegaard Floer homology of the
branched double covers of alternating links \cite{OS}. The links that share similar homological properties with alternating links form a new class of links that was called the class of quasi-alternating links.  Unlike alternating links which admit a simple diagrammatic definition, quasi-alternating links  have  been defined  recursively as follows:

\begin{defn}\label{def}
The set $\mathcal{Q}$ of quasi-alternating links is the smallest set
satisfying the following properties:
\begin{itemize}
	\item The unknot belongs to $\mathcal{Q}$.
  \item If $L$ is a link with a diagram $D$ containing a crossing $c$ such that
\begin{enumerate}
\item both smoothings of the diagram $D$ at the crossing $c$, $L_{0}$ and $L_{1}$ as in Figure \ref{figure} belong to $\mathcal{Q}$, and
\item $\det(L_{0}), \det(L_{1}) \geq 1$,
\item $\det(L) = \det(L_{0}) + \det(L_{1})$;

then $L$ is in $\mathcal{Q}$ and in this case we say $L$ is quasi-alternating at the crossing $c$ with quasi-alternating diagram $D$.
\end{enumerate}
\end{itemize}
\end{defn}

\begin{figure} [h]
  % Requires \usepackage{graphicx}
\begin{center}
\includegraphics[width=10cm,height=2cm]{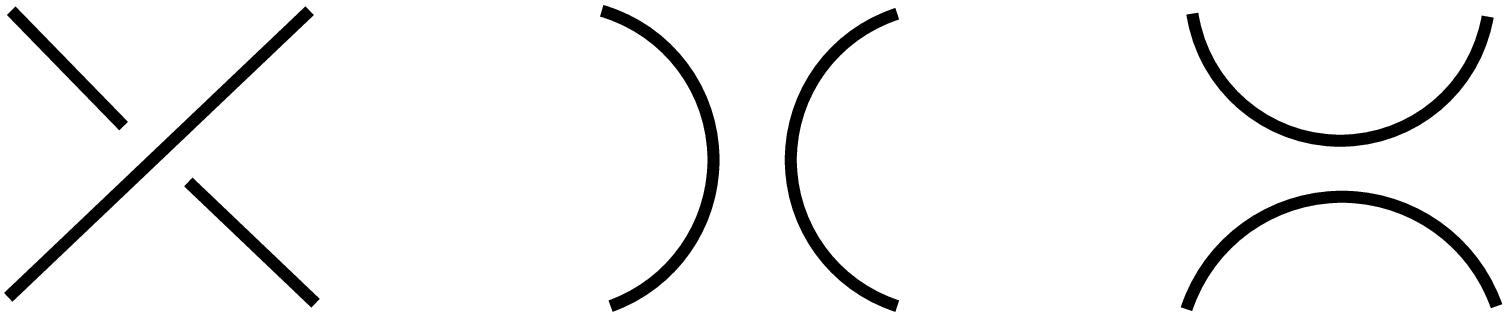} \\
{$L$}\hspace{3.5cm}{$L_0$}\hspace{3.5cm}$L_{1}$
\end{center}
\caption{The link diagram $L$ at the crossing $c$ and its smoothings $L_{0}$ and $L_{1}$ respectively.}\label{figure}
\end{figure}

We let $V_L(t)$ and $\breadth(L)$ to denote the Jones polynomial and its breadth  of the oriented link $L$, respectively. In addition, we let $c(L)$ and $\det(L)$ to denote the crossing number and the determinant of the link $L$, respectively.  A natural question that  arises here is whether  the inequalities $\breadth(L) \leq \det(L)$ and $c(L) \leq \det(L)$ hold for any quasi-alternating link $L$.  These inequalities are known to be true in the case of alternating links according to the above discussion. On the other hand,  these inequalities were conjectured to hold for quasi-alternating links as well, see \cite[Conjecture\,3.8]{QC} and \cite[Conjecture\,1.1]{QQJ}.

In this paper, we  use the spanning tree expansion of the Jones polynomial that has been introduced in \cite{Th} to prove that the Jones polynomial of any prime quasi-alternating link that is not a $(2,n)$-torus link has no gap. Hence,  we obtain a  generalization  of \cite[Theorem\,1(iv)]{Th} to the class quasi-alternating links. Consequently and by using  \cite[Proposition\,3.10]{QC1}, we prove  that the differential grading of the Khovanov homology of such a link has no gap   which establishes  \cite[Conjecture\, 4.13]{QC1}. Moreover, we conclude that the determinant of any quasi-alternating link is an upper bound of the breadth of its Jones polynomial which establishes \cite[Conjecture\,3.8]{QC}.
It is noteworthy, that these properties of the Jones polynomial   provide simple obstructions for a link to be quasi-alternating. Several other obstruction  criteria obtained in terms of polynomial invariants and link homology  can be found in \cite{MO,ORS,OS,QC1,T} for instance.

Here is an outline of this paper. In Section 2, we briefly recall the definition of the Jones polynomial and its spanning tree expansion. In Section 3, we prove the main result on the Jones polynomial of quasi-alternating links. Finally, some applications  of the main result are discussed in Section 4.

%%%%%%%%%%%%%%%%%%%%%%%%%%%%%%%%%%%%%%%%%%%%%%%%%%%%%%%%%%%%%%%%%%%%%%%%%%%
%%%%%%%%%%%%%%%%%%%%%%%%%%%%%%%%%%%%%%%%%%%%%%%%%%%%%%%%%%%%%%%%%%%%%%%%
\section{Jones Polynomial and the Polynomial $\Gamma_{G}$}
%%%%%%%%%%%%%%%%%%%%%%%%%%%%%%%%%%%%%%%%%%%%%%%%%%%%%%%%%%%%%%%%%%%%%%%%%%%%
%%%%%%%%%%%%%%%%%%%%%%%%%%%%%%%%%%%%%%%%%%%%%%%%%%%%%%%%%%%%%%%%%%%%%%%%%%%
This section and to make this paper more self-contained is devoted to recall the definition of the Jones polynomial and review  some of its basic properties needed in the sequel. In particular, we shall describe how the Jones polynomial of a link can be calculated using the spanning tree expansion of any Tait graph associated with a link diagram. The reader is refereed  to \cite{Th} for more  details. %Let us begin by some definitions and notations. \\

%A \textsl{simple cycle} of a planar graph $G$ is a cycle that encloses exactly one region in the plane. A common path between two simple cycles is called a \textsl{simple path}. In general, any spanning tree of a graph $G$ can be described in terms of the simple cycle decomposition of the graph. In particular, if $G$ is decomposed as non-disjoint union of simple cycles $s_{1}, s_{2}, \ldots, s_{m}$, then any spanning tree $T$ is simply equal to $(s_{1}-e_{1}) \cup (s_{2}-e_{2}) \cup \ldots \cup (s_{m}-e_{m})$, where no two distinct edges $e_{i}$ and $e_{j}$ belong to the same simple path for any $1\leq i \neq j \leq m$.

\begin{defn}
The Kauffman bracket polynomial is a function from the set of unoriented link diagrams in the oriented plane to the ring of Laurent polynomials with integer coefficients in an indeterminate $A$. It maps a link $L$ to $\left\langle L\right\rangle\in \mathbb Z[A^{-1},A]$ and is uniquely  determined by the  following relations:
\begin{enumerate}
\item $\left\langle \bigcirc \right\rangle=1$,
\item $\left\langle \bigcirc \cup L\right\rangle=(-A^{-2}-A^2)\left\langle L\right\rangle$,
\item $\left\langle L\right\rangle=A\left\langle L_0\right\rangle+A^{-1}\left\langle L_1\right\rangle$,
\end{enumerate}
where $\bigcirc$ denotes the unknot and  $L,L_0, \text{and } L_1$  represent three unoriented links which  are identical except in a small region where they  look as in  Figure \ref{figure}.
\end{defn}

Given an oriented link diagram  $L$, let $x(L)$  denote the number of negative crossings and  let $y(L)$  denote the number of positive crossings in $L$, see Figure \ref{Diagram1}.
The  writhe of $L$ is defined as the integer $w(L) = y(L) - x(L)$.

\begin{defn}
The Jones polynomial $V_{L}(t)$ of an oriented link $L$ is the Laurent polynomial in $t^{1/2}$ with integer coefficients defined by
\begin{equation*}
V_{L}(t)=((-A)^{-3w(L)}\left\langle L \right\rangle)_{t^{1/2}=A^{-2}}\in \mathbb Z[t^{-1/2},t^{1/2}],
\end{equation*}
where $\left\langle L \right\rangle$ denotes the bracket polynomial of the link $L$ with orientation ignored.
\end{defn}

\begin{figure}[h]
	\centering
		\includegraphics[scale=0.12]{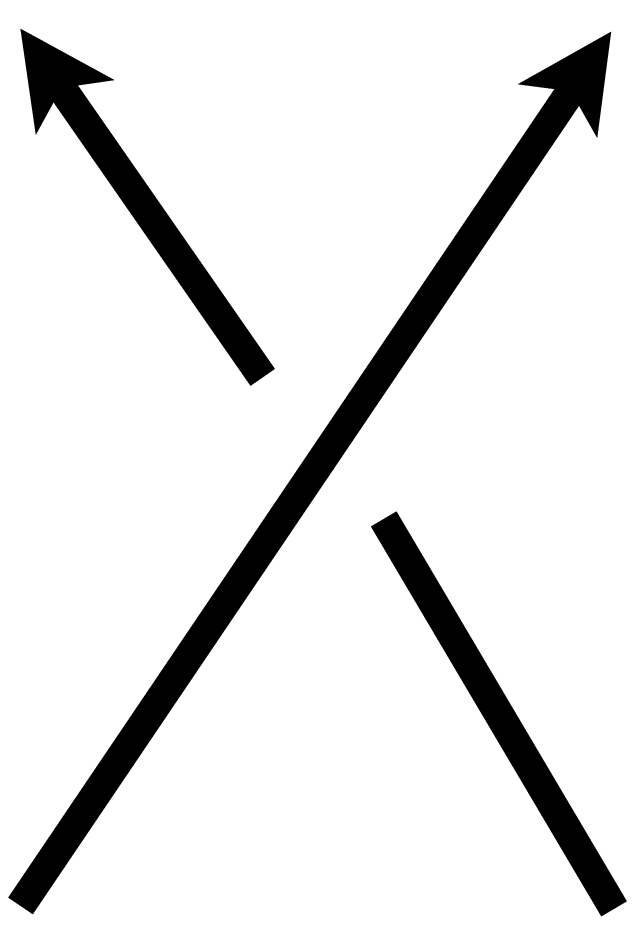}\hspace{2cm}\reflectbox{\includegraphics[scale=0.12]{Diagram1}}
	\caption{Positive and negative crossings respectively}
	\label{Diagram1}
\end{figure}
\begin{rem}
All the results in this paper are restricted to the case where the crossing $c$ in the link diagram of $L$ is as illustrated by Figure \ref{figure} and in this case the crossing will be called to be of type I. Similar results can be obtained for the other type of crossing by taking the mirror image if required.
\end{rem}

\begin{rem}\label{simple2}
\begin{enumerate}
\item If the crossing is positive of type I, then we have $x(L_{0}) = x(L), y(L_{0}) = y(L) - 1, x(L_{1}) = x(L) + e$ and
$y(L_{1}) = y(L)-e-1$. Therefore, $w(L_{0}) =w(L)-1$ and $w(L_{1}) = w(L) -2e-1$,
where $e$ denotes the difference between the number of negative crossings in
$L_{1}$ and the number of negative crossings in $L$.
\item If the crossing is negative of type I, then we have $x(L_{1}) = x(L)-1, y(L_{1}) = y(L), x(L_{0}) = x(L) + e-1$ and
$y(L_{0}) = y(L)-e$. Therefore, $w(L_{1}) =w(L)+1$ and $w(L_{0}) = w(L) - 2e+1$,
where $e$ denotes the difference between the number of negative crossings in
$L_{0}$ and the number of negative crossings in $L$.
%\item If the crossing is positive of type II, then we have $x(L_{0}) = x(L), y(L_{0}) = y(L) - 1, x(L_{1}) = x(L) + e$ and
%$y(L_{1}) = y(L)-e-1$. Therefore, $w(L_{0}) =w(L)-1$ and $w(L_{1}) = w(L) -2e-1$,
%where $e$ denotes the difference between the number of negative crossings in
%$L_{1}$ and the number of negative crossings in $L$.
%\item If the crossing is negative of type II, then we have $x(L_{1}) = x(L)-1, y(L_{1}) = y(L), x(L_{0}) = x(L) + e-1$ and
%$y(L_{0}) = y(L)-e$. Therefore, $w(L_{1}) =w(L)+1$ and $w(L_{0}) = w(L) - 2e+1$,
%where $e$ denotes the difference between the number of negative crossings in
%$L_{0}$ and the number of negative crossings in $L$.
\end{enumerate}
\end{rem}

\begin{lemm}\label{jonespolynomial}
The Jones polynomial of the link $L$ at the crossing $c$ satisfies one of the following skein relations:
\begin{enumerate}
\item If $c$ is a positive crossing of type I, then
$ V_{L}(t) = -t^{\frac{1}{2}}V_{L_{0}}(t)-t^{\frac{3e}{2}+1}V_{L_{1}}(t)$.
\item If $c$ is a negative crossing and of type I, then
$ V_{L}(t) = -t^{\frac{3e}{2}-1}V_{L_{0}}(t)-t^{\frac{-1}{2}}V_{L_{1}}(t)$.
%\item If $c$ is a positive crossing and of type II, then
%$ V_{L}(t) = -t^{\frac{3e}{2}+1}V_{L_{1}}(t)-t^{\frac{1}{2}}V_{L_{0}}(t)$.
%\item If $c$ is a negative crossing of type II, then
%$ V_{L}(t) = -t^{\frac{-1}{2}}V_{L_{1}}(t)-t^{\frac{3e}{2}-1}V_{L_{0}}(t)$.
\end{enumerate}
where $e$ is as defined in Remark \ref{simple2}.
\end{lemm}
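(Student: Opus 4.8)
The plan is to derive both relations directly from the Kauffman bracket skein relation (3), namely $\langle L\rangle = A\langle L_0\rangle + A^{-1}\langle L_1\rangle$, by applying the writhe normalization and then substituting $A^{-2}=t^{1/2}$. First I would multiply this identity through by the factor $(-A)^{-3w(L)}$ that defines $V_L$. The point is that this single factor does not immediately produce $V_{L_0}$ and $V_{L_1}$, since each of those is normalized by its own writhe; so in each summand I would insert the trivial factor $(-A)^{3w(L_i)}(-A)^{-3w(L_i)}$, thereby splitting off the genuine Jones-polynomial normalization $(-A)^{-3w(L_i)}\langle L_i\rangle$ and leaving an explicit power of $-A$ governed by the writhe difference $w(L)-w(L_i)$.

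For the positive-crossing case I would substitute the identities $w(L_0)=w(L)-1$ and $w(L_1)=w(L)-2e-1$ from Remark \ref{simple2}(1). The $L_0$-summand then carries the prefactor $A\,(-A)^{-3(w(L)-w(L_0))}=A(-A)^{-3}=-A^{-2}$, which becomes $-t^{1/2}$ after setting $A^{-2}=t^{1/2}$. The $L_1$-summand carries $A^{-1}(-A)^{-3(w(L)-w(L_1))}=A^{-1}(-A)^{-6e-3}=-A^{-6e-4}$, and since $A^{-6e-4}=(A^{-2})^{3e+2}$ this evaluates to $-t^{3e/2+1}$. Assembling the two summands gives exactly $V_L(t)=-t^{1/2}V_{L_0}(t)-t^{3e/2+1}V_{L_1}(t)$. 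The negative-crossing case is entirely parallel, using instead $w(L_1)=w(L)+1$ and $w(L_0)=w(L)-2e+1$ from Remark \ref{simple2}(2); the same manipulation yields the prefactors $-t^{3e/2-1}$ on the $L_0$-term (from $A(-A)^{-6e+3}=-A^{-6e+4}$) and $-t^{-1/2}$ on the $L_1$-term (from $A^{-1}(-A)^{3}=-A^{2}$).

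The computation is routine, so there is no genuine obstacle; the only point requiring care is the bookkeeping of signs coming from powers of $-A$. Here one observes that the exponent of $-A$ in each summand equals $-3\bigl(w(L)-w(L_i)\bigr)$, and in every one of the four cases the writhe difference is odd, equal to $\pm 1$ or $\pm(2e+1)$, so that $(-1)$ is raised to an odd power and contributes a factor $-1$. This is precisely why both terms in each relation appear with a minus sign, independently of the parity of $e$. I would close by remarking that the $e$-dependence sits entirely in an even exponent $6e$ of $A$, hence disappears from the sign and enters only through the half-integer power $t^{3e/2}$ of the $L_1$-term in case (1) and the $L_0$-term in case (2).
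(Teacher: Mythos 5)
Your derivation is correct and is precisely the routine computation the paper leaves implicit: Lemma \ref{jonespolynomial} is stated there without proof, being the direct unwinding of the definition $V_{L}(t)=((-A)^{-3w(L)}\langle L\rangle)_{t^{1/2}=A^{-2}}$ through the bracket relation $\langle L\rangle=A\langle L_{0}\rangle+A^{-1}\langle L_{1}\rangle$ and the writhe counts of Remark \ref{simple2}, exactly as you do. One cosmetic slip in your closing remark: in the negative case the writhe differences $w(L)-w(L_{i})$ are $2e-1$ and $-1$, not ``$\pm 1$ or $\pm(2e+1)$'', but since all four differences are odd the sign argument is unaffected, and your explicit prefactor computations handle each case correctly anyway.
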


Recall that one can associate a planar signed graph $G$ with any given  link diagram. This planar graph, known as the Tait graph,  is defined using the checkerboard coloring of the link diagram  in the following manner. First, we color the regions of the link diagram in $\mathbb{R}^{2}$ black and white such that regions that share an arc have different colors. Then we place a vertex in each black region. The edges of this graph correspond to the crossings of the given link diagram in a way that two vertices are joined by an edge  whenever there is a crossing between the two corresponding regions.  Moreover, each edge is  equipped with a sign according to the scheme in Figure \ref{edge}. By interchanging black and white regions, we obtain the planar dual graph of $G$ denoted by $G_{*}$. Note here that the signs of the edges of this dual graph   are  the opposite of  their respective dual counterparts in $G$. It is clear that $G$ is connected  if and only if  the given link diagram  is connected. The discussion in this paper is restricted to connected link diagrams.
\begin{figure}[h]
	\centering
		\includegraphics[scale=0.12]{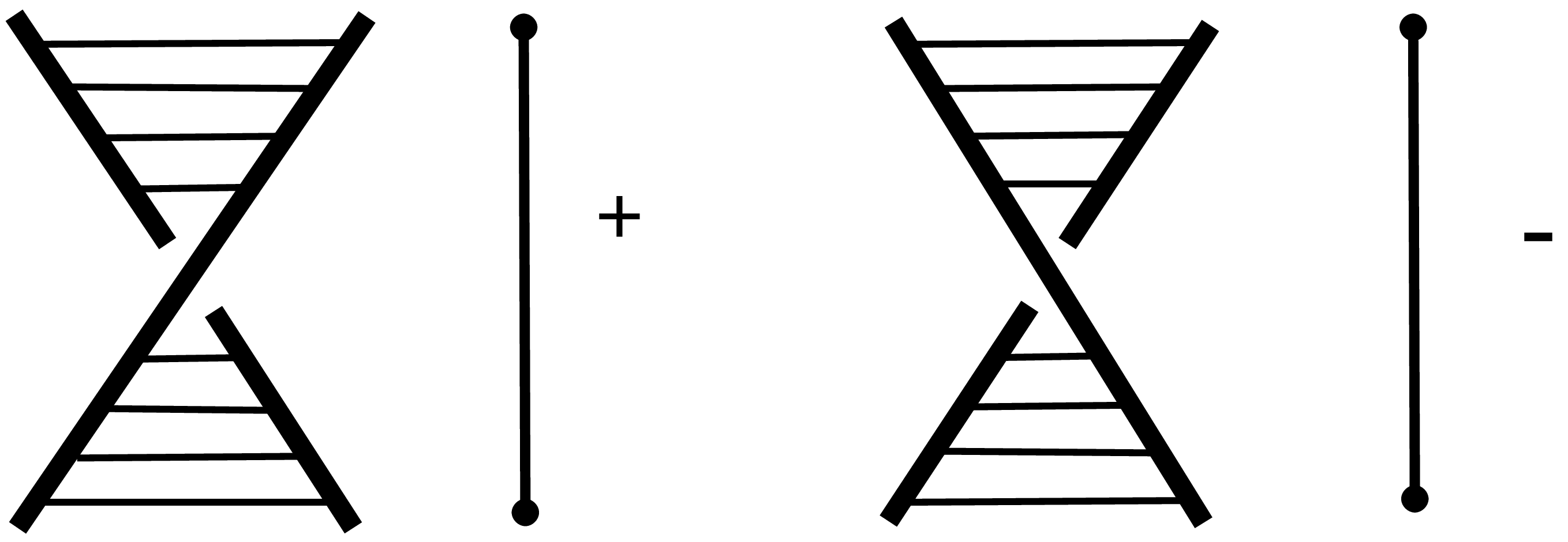}%\hspace{2cm}%\reflectbox{\includegraphics[scale=0.12]{edge.eps}}
	\caption{Positive and negative edges in the Tait graph.}
	\label{edge}
\end{figure}

It is worth mentioning here that the work of Tutte \cite{Tu} implies that a one-variable polynomial related to the Tutte polynomial $\chi_{G}$ of the Tait graph is equal to the Jones polynomial of the corresponding link  up to a phase, namely $V_{L}(t) = \pm t^{r} \chi_{G}(-t,-t^{-1})$ for some half-integer $r$ (see \cite{Th}). Also, the work of Thistlewaite  \cite{Th} implies that the Jones polynomial can be also obtained as a specialization of another polynomial $\Gamma_{G}(A)$ at $t^{1/2}=A^{-2} $ which is defined in terms of the spanning trees of $G$ as follows:

\[\Gamma_{G}(A) = \sum\limits_{T_{i}\in \mathbb{T}} w(T_{i}) = \sum\limits_{T_{i}\in \mathbb{T}} \left(\prod_{e_{j} \in G} \mu_{ij}\right),\] where  $\mathbb{T} = \{T_{1}, T_{2}, \ldots, T_{k}\}$ is the set of all spanning trees of $G$ and $e_{1},e_{2}, \ldots, e_{m}$ are the edges of the graph $G$ with some fixed order. The weight $w(T_{i})$ of the spanning tree $T_{i}$ is the product $\prod_{e_{j} \in G} \mu_{ij}$ where $\mu_{ij}$ denotes the weight corresponding to the state of the edge $e_{j}$ in the spanning tree $T_{i}$. The state of such an edge with respect to the given spanning tree is the internal or external activity of the edge $e_{j}$ in $G$ with respect to the spanning tree $T_{i}$.

This state is one of the eight states that will be denoted by an appropriate word in the shorthand symbols. All possible states of the edge $e_{j}$ with respect to the spanning tree $T_{i}$ and its corresponding weights are given in Table \ref{Table}. In this table, $L, D, l, d$ denotes internally active, internally inactive, externally active and externally inactive respectively for the edge $e_{j}$ of positive sign and the other entries for the edge of negative sign.
\begin{table}[h]
\centering
\begin{tabular}{c||cccccccc}
\hline \\ [0.2pt]
State of $e_{j}$ in the spanning tree $T_{i}$ & L & D & \textit{l} & d & $\overline{\text{L}}$ & $\overline{\text{D}}$ & $\overline{\text{\textit{l}}}$ & $\overline{\text{d}}$  \\ [0.2pt]
\hline \\ [0.2pt]
$\mu_{ij}$ & $-A^{-3}$ & $A$ & $-A^{3}$& $A^{-1}$ & $-A^{3}$ &$A^{-1}$ & $-A^{-3}$ & $A$\\ [1pt]
\hline
\end{tabular}
%\vspace{0.3cm}
\label{Table}
\end{table}
%%%%%%%%%%%%%%%%%%%%%%%%%%%%%%%%%%%%%%%%%%%%%%%%%%%%%%%%
%%%%%%%%%%%%%%%%%%%%%%%%%%%%%%%%%%%%%%%%%%%%%%%%%%%%%%%%
\section{The Main Theorem and its Proof}
 %%%%%%%%%%%%%%%%%%%%%%%%%%%%%%%%%%%%%%%%%%%%%%%%%%%%%%%%
In this section, we shall prove our main result in this paper which is given by the following theorem.
\begin{thm}\label{jones}
If $L$ is a prime quasi-alternating link that is not a $(2,n)$-torus link, then the Jones polynomial $V_{L}(t)$ has no gap.
\end{thm}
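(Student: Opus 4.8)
The plan is to induct on the determinant of the quasi-alternating link $L$, using the skein relations for the Jones polynomial recorded in Lemma~\ref{jonespolynomial} together with the recursive structure built into Definition~\ref{def}. Since $L$ is quasi-alternating at some crossing $c$, both smoothings $L_0$ and $L_1$ are quasi-alternating with strictly smaller determinant, and $\det(L)=\det(L_0)+\det(L_1)$. First I would establish the base cases: the unknot, and the smallest quasi-alternating links, checking directly that their Jones polynomials have no gap (or are $(2,n)$-torus links and hence excluded). The inductive step would express $V_L(t)$ via the appropriate skein relation, say $V_L(t)=-t^{1/2}V_{L_0}(t)-t^{(3e/2)+1}V_{L_1}(t)$ in the positive case, and then analyze how the gaps of the two summands interact after multiplication by the monomial phase factors $-t^{1/2}$ and $-t^{(3e/2)+1}$.

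The technical heart of the argument is controlling the \emph{breadths and extremal exponents} of $V_{L_0}$ and $V_{L_1}$ so that the two shifted polynomials \emph{overlap}, and so that no cancellation of leading or trailing terms reopens a gap. Here I expect to invoke the sign-definiteness of the coefficients of the Jones polynomial of quasi-alternating links: because the coefficients alternate in sign (a property one would prove in tandem via the spanning-tree expansion $\Gamma_G(A)$ and Table~\ref{Table}, tracking the signs of the weights $\mu_{ij}$), the terms arising from $V_{L_0}$ and $V_{L_1}$ in any shared degree cannot cancel. Concretely, I would show that the minimal and maximal degrees of $V_{L_0}$ and $V_{L_1}$, once shifted by the monomials above, interlace so that $-t^{1/2}V_{L_0}$ and $-t^{(3e/2)+1}V_{L_1}$ together tile a full range of exponents with no gap between or within them.

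The main obstacle is the primeness hypothesis and the $(2,n)$-torus exclusion, which is exactly where the naive induction breaks. The smoothings $L_0$ and $L_1$ of a prime link need \emph{not} be prime, and may well be connected sums of Hopf links or $(2,n)$-torus links whose Jones polynomials \emph{do} have gaps; so the inductive hypothesis cannot be applied verbatim. To handle this I would either (i) set up a stronger inductive statement that simultaneously tracks the gap structure of all quasi-alternating links, isolating precisely the families that produce gaps (the $(2,n)$-torus links and connected sums of Hopf links, consistent with the abstract's final claim), or (ii) argue that when $L$ is prime and not a $(2,n)$-torus link, at least one of $L_0,L_1$ is gapless with sufficient breadth to bridge any gap contributed by the other. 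Quantifying ``sufficient breadth'' is where I expect to spend the most effort: I would need a lower bound on $\breadth(L_i)$ in terms of $\det(L_i)$, likely derived from the spanning-tree expansion, to guarantee the overlap. The alternating-sign property does the rest, ruling out accidental cancellation and closing the induction.
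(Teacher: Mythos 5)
Your proposal correctly identifies the skeleton of the paper's argument (induction on determinant, the skein relation of Lemma \ref{jonespolynomial}, and sign-alternation to rule out cancellation, which is exactly Lemma \ref{cancellation} in the paper), and you correctly flag the two danger points: degree bookkeeping and the failure of primeness under smoothing. But at both danger points your proposed fixes are not workable as stated, and the missing content is precisely the technical heart of the actual proof. First, your claim that the shifted polynomials $-t^{1/2}V_{L_0}$ and $-t^{(3e/2)+1}V_{L_1}$ must ``overlap'' or ``interlace so that they tile a full range of exponents'' is not something you can force, and in fact it is not what happens: the two blocks of exponents can be disjoint. What saves the argument is a sharp \emph{adjacency} statement, not an overlap statement. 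The paper proves (Proposition \ref{new}, refined in Remark \ref{simple}(2), via its own separate induction on the number of positive crossings between the two strands meeting at $c$) that the gap between $A\langle L_0\rangle$ and $A^{-1}\langle L_1\rangle$, if it exists at all, has length exactly three or seven in the variable $A$; since the monomial degrees of the spanning-tree polynomial $\Gamma_G(A)$ are congruent modulo four, a gap of length three in $A$ means the two blocks are exactly consecutive powers of $t$, i.e.\ no gap in the Jones polynomial, and length seven occurs only when one smoothing has Jones breadth two. Without this quantitative result your ``interlacing'' step is an unsupported assertion, and no lower bound of the form $\breadth(L_i)\geq f(\det(L_i))$ will substitute for it, because adjacency rather than overlap is what actually holds.

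Second, your two alternatives (i) and (ii) for handling non-prime or torus-link smoothings remain wishes rather than arguments, and the paper needs several concrete external inputs to close exactly this case, none of which appear in your sketch: Kidwell's lemma guaranteeing that at least one of $L_0,L_1$ is prime; Lemma \ref{connected} (components of a quasi-alternating connected sum are quasi-alternating); the product analysis of Remark \ref{gapofproduct} for connected sums of gapped polynomials; a \emph{second} induction parameter, namely the minimal number of edges one must delete from the Tait graph $G$ (or its dual) to obtain a spanning tree, which is what pins down the residual bad cases (a gap of length seven in both smoothings, or both components of a non-prime smoothing being $(2,n)$-torus links) as $(2,n)$-torus links or 3-strand pretzel links; and an external theorem (\cite[Theorem 3.10]{CQ}) that disposes of the pretzel case. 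Even the base case of your determinant induction is not routine: the paper needs Lemma \ref{lemma} (a quasi-alternating link with Jones breadth at most three is the unknot, Hopf link, or trefoil), whose proof relies on the classification of quasi-alternating links with small determinant \cite{G,LS}. So while your plan points in the right direction, it has genuine gaps at the two places where the real work lies.
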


%\begin{rem}\label{samenumber}
%It is well-known fact that any two spanning trees of $G$ has the same number of edges.
%\end{rem}

For the rest of the paper, we introduce the notion of simple cycle and simple path in any planar graph. A \textsl{simple cycle} of a planar graph $G$ is a cycle that encloses exactly one region in the plane. A common path between two simple cycles is called a \textsl{simple path}. In general, any spanning tree of a graph $G$ can be described in terms of the simple cycle decomposition of the graph. In particular, if $G$ is decomposed as non-disjoint union of simple cycles $s_{1}, s_{2}, \ldots, s_{m}$, then any spanning tree $T$ is simply equal to $(s_{1}-e_{1}) \cup (s_{2}-e_{2}) \cup \ldots \cup (s_{m}-e_{m})$, where no two distinct edges $e_{i}$ and $e_{j}$ belong to the same simple path for any $1\leq i \neq j \leq m$.

Throughout the rest of this paper,  unless otherwise specified,  $G$ denotes the Tait graph of the quasi-alternating link diagram of the prime link $L$. Using the second Reidmeister move, we can assume that each simple path in  $G$ consists of edges of the same sign. It can be easily seen  that the quasi-alternating diagram can be assumed to be irreducible and connected. Under this assumption and the fact that $L$ is prime, we can suppose  that $G$ is non-separable, connected and has no loops or isthmuses as they correspond to removable crossings.  Obviously, the Tait graphs of the links $L_{0}$ and $L_{1}$ can be obtained from the graph $G$ by  deleting  and contracting  the edge which corresponds to the crossing $c$, respectively.  These graphs are denoted hereafter by $G_{0}$ and $G_{1}$, respectively.

\begin{rem}\label{skein}
According to  \cite{Th}, the polynomial $\Gamma_{G}(A)$ satisfies the skein relation $\Gamma_{G}(A) = A^{\epsilon}\Gamma_{G_{0}}(A) + A^{-\epsilon}\Gamma_{G_{1}}(A)$, where  $\epsilon =\pm1$ is the sign of the deleted-contracted edge.
\end{rem}
\begin{lemm}\label{cancellation}
In this settings, there is no cancellation between the terms of $A^{\epsilon}\Gamma_{G_{0}}(A)$ and the terms of $A^{-\epsilon}\Gamma_{G_{1}}(A)$ in the skein relation of the polynomial $\Gamma_{G}(A)$.% = A^{\epsilon}\Gamma_{G_{0}}(A) + A^{-\epsilon}\Gamma_{G_{1}}(A)$ for $\epsilon = 1$ or $-1$ but not both.
\end{lemm}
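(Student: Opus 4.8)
The plan is to show that the two polynomials $A^{\epsilon}\Gamma_{G_0}(A)$ and $A^{-\epsilon}\Gamma_{G_1}(A)$ occupy \emph{disjoint} ranges of exponents in $A$, so that when they are added in the skein relation of Remark \ref{skein}, no term of one can cancel a term of the other. Since cancellation can only occur between monomials of the same degree, it suffices to prove that the minimal exponent appearing in one summand strictly exceeds the maximal exponent appearing in the other (or vice versa). I would therefore reduce the lemma to a comparison of the extreme degrees $\maxdeg$ and $\mindeg$ of $\Gamma_{G_0}$ and $\Gamma_{G_1}$, shifted by the factors $A^{\pm\epsilon}$.

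First I would recall that each spanning tree $T_i$ of $G$ contributes a monomial $\prod_j \mu_{ij}$ whose degree is determined by the eight possible edge-states in Table \ref{Table}. The key arithmetic observation is that every positive-edge weight contributes an exponent in $\{-3,+1\}$ for internal states $\{L,D\}$ and $\{-3,+3\}$-type patterns, and similarly for negative edges; more usefully, one can read off from the table that each edge contributes to the $A$-degree a value whose parity and sign are governed by whether the edge is internally or externally active. I would exploit the fact that $G_0 = G - e$ and $G_1 = G/e$ are obtained by deleting and contracting the distinguished edge $e$ corresponding to $c$, and that spanning trees of $G$ split precisely into those using $e$ (bijective with spanning trees of $G_1$) and those avoiding $e$ (bijective with spanning trees of $G_0$). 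The skein relation of Remark \ref{skein} is exactly the manifestation of this bijection, with the factor $A^{\pm\epsilon}$ recording the state of $e$.

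The crux of the argument is to establish an inequality separating the two exponent ranges. Because $G$ is non-separable, connected, and loopless/bridgeless (as arranged in the paragraph preceding Remark \ref{skein}), and because each simple path consists of edges of a single sign, the internal/external activities of the remaining edges are controlled coherently across the two families of trees. I would argue that in passing from a tree of $G_0$ to a tree of $G_1$ through the common structure of $G$, the state of $e$ forces a uniform shift in the total degree: concretely, the contribution of $e$ in the trees counted by $\Gamma_{G_1}$ versus its absence in those counted by $\Gamma_{G_0}$, together with the $A^{\pm\epsilon}$ prefactors, guarantees $\mindeg\bigl(A^{\epsilon}\Gamma_{G_0}\bigr) > \maxdeg\bigl(A^{-\epsilon}\Gamma_{G_1}\bigr)$ up to swapping the roles according to the sign $\epsilon$. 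The parity of the exponents (all weights in Table \ref{Table} have odd degree for the $-A^{\pm 3}$ entries and odd degree $\pm 1$ for the $A^{\pm 1}$ entries, so each tree's total degree has a fixed parity determined by the number of edges) provides an extra rigidity that can rule out accidental degree coincidences.

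The main obstacle I anticipate is proving the separation of degree ranges cleanly rather than term-by-term, since the activities of individual edges depend globally on the chosen edge order and on the tree. My strategy to surmount this would be to phrase the weight of a spanning tree intrinsically via the state contributions and to track only the \emph{aggregate} extreme degrees, showing that the quasi-alternating hypothesis---which by Definition \ref{def} forces $\det(L)=\det(L_0)+\det(L_1)$ with both summands positive---rules out the degenerate coincidences that would permit cancellation. In effect, the additivity of the determinant is the combinatorial shadow of the statement that the leading and trailing behaviours of $\Gamma_{G_0}$ and $\Gamma_{G_1}$ do not overlap after the $A^{\pm\epsilon}$ shift; I would make this precise by evaluating at the determinant-detecting specialization and comparing with the spanning-tree count, so that any hypothetical cancellation would contradict $\det(L)=\det(L_0)+\det(L_1)$.
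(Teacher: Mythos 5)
Your primary strategy---showing that $A^{\epsilon}\Gamma_{G_{0}}(A)$ and $A^{-\epsilon}\Gamma_{G_{1}}(A)$ occupy disjoint ranges of exponents---cannot work, because that claim is false in general. The supports of the two summands typically overlap heavily: for a prime alternating (hence quasi-alternating) diagram with many crossings, the breadths of $\Gamma_{G_{0}}$ and $\Gamma_{G_{1}}$ are large (in $A$-degree, on the order of four times the crossing numbers of $L_{0}$ and $L_{1}$), while the relative shift between the two summands coming from the prefactors is only $A^{\pm 2\epsilon}$. Degree separation is the exceptional situation---it is precisely what Proposition \ref{new} and Remark \ref{simple}(2) analyze as a ``gap'' between the two pieces---not the generic one, so no inequality of the form $\mindeg\bigl(A^{\epsilon}\Gamma_{G_{0}}\bigr) > \maxdeg\bigl(A^{-\epsilon}\Gamma_{G_{1}}\bigr)$ can be established. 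Absence of cancellation must instead be proved where the supports do overlap, namely by showing that coinciding degrees always carry coefficients of the same sign.

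That sign coherence is exactly what the paper's proof supplies, and it is the ingredient your fallback argument in the final paragraph is missing. The paper rests on four facts: each of the polynomials involved has all its monomial degrees congruent modulo four; each is alternating in the sense that monomials whose degrees agree modulo eight have nonzero coefficients of the same sign; $\det(L) = |\Gamma_{G}(e^{\pi i/4})|$; and $\det(L) = \det(L_{0}) + \det(L_{1})$. The first two facts force every term to contribute with the same phase upon evaluation at $A = e^{\pi i/4}$ (there $A^{4} = -1$, so the mod-four spacing combined with the mod-eight sign alternation aligns all contributions), whence $|\Gamma(e^{\pi i/4})|$ equals the sum of the absolute values of the coefficients for each of the three polynomials. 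Any cancellation between the two summands would then force $\det(L) < \det(L_{0}) + \det(L_{1})$, contradicting quasi-alternation at $c$. You do invoke the determinant-detecting specialization and the additivity $\det(L)=\det(L_{0})+\det(L_{1})$ at the end, but without the mod-four congruence and the alternating-sign property this inference is not valid: a priori the monomials of $\Gamma_{G}$ could evaluate to complex numbers pointing in different directions, in which case cancellation of monomials need not decrease $|\Gamma_{G}(e^{\pi i/4})|$ at all. Likewise, your assertion that determinant additivity is ``the combinatorial shadow'' of non-overlapping supports is incorrect; it is the shadow of sign coherence on overlapping supports.
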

\begin{proof}
The proof is straightforward   using the the following facts:
\begin{itemize}

\item The polynomials $A^{\epsilon}\Gamma_{G_{0}}(A)$ and $A^{-\epsilon}\Gamma_{G_{1}}(A)$ have only monomials of degrees congruent modulo four.
\item The polynomials $A^{\epsilon}\Gamma_{G_{0}}(A), A^{-\epsilon}\Gamma_{G_{1}}(A)$ and $\Gamma_{G}(A)$ are alternating in the sense that if two monomials have degrees congruent modulo eight then their nonzero coefficients are of the same sign.
\item $\det(L) = |\Gamma_{G}(e^{\frac{\pi i}{4}})|$.
\item The link $L$ is quasi-alternating at the crossing $c$ that is $\det(L) = \det(L_{0}) + \det(L_{1})$.
\end{itemize}
\end{proof}

\begin{prop}\label{new}
Let $L$ be a link and $c$ be a crossing of this link consisting of two
arcs of two different components such that one of the  polynomial $A\langle L_{0}\rangle$ and $A^{-1}\langle L_{1}\rangle$ does not consist of only one monomial. Then the gap, if it exists, between the polynomials $A\langle L_{0}\rangle$ and $A^{-1}\langle L_{1}\rangle$ is of length three.
\end{prop}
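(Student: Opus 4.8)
The plan is to work throughout in the variable $A$ and to reduce the claim to a statement about the extreme exponents of the two Laurent polynomials $f:=A\langle L_{0}\rangle$ and $g:=A^{-1}\langle L_{1}\rangle$. By definition, a gap \emph{between} $f$ and $g$ exists only when their exponent supports are disjoint and one lies entirely above the other; say $g$ lies above $f$, so that the gap has length $\mindeg(g)-\maxdeg(f)-1$ (the symmetric case is identical). Thus it suffices to prove that, whenever the supports are separated, the lowest exponent of the upper polynomial exceeds the highest exponent of the lower one by exactly $4$. First I would record the combinatorial input coming from the hypothesis on $c$: since the two arcs at $c$ belong to distinct components, \emph{both} smoothings merge these two components, so $L_{0}$ and $L_{1}$ have the same number of components, one fewer than $L$.

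The first main step is a congruence computation modulo $4$. Writing $\langle D\rangle$ as its state sum $\sum_{s}A^{\alpha(s)-\beta(s)}(-A^{2}-A^{-2})^{|s|-1}$, where $\alpha(s)+\beta(s)=n$ is the number of crossings and $|s|$ is the number of state loops, every exponent of $\langle D\rangle$ is congruent mod $4$ to $2(\alpha(s)+|s|-1)-n$, and the parity of $\alpha(s)+|s|$ is a state-independent invariant of the diagram (flipping one smoothing changes $\alpha$ by $1$ and $|s|$ by $\pm1$). Comparing the states of $L_{0}$ and of $L_{1}$ with those of $L$ shows that these two parities are opposite, and after incorporating the prefactors $A^{+1}$ and $A^{-1}$ one finds that $f$ and $g$ have all their exponents in a single common residue class modulo $4$; this is the mechanism already used in Lemma~\ref{cancellation}. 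Consequently, when the supports are disjoint, $\mindeg(g)-\maxdeg(f)$ is a positive multiple of $4$, so the gap is $\equiv 3\pmod 4$, and in particular it is at least $3$.

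It remains to prove the matching upper bound, namely that the separation is exactly one slot (difference $4$, not $8,12,\dots$), and this is where I expect the real difficulty to lie and where both hypotheses are used. The role of the assumption that at least one of $f,g$ is not a single monomial is made clear by the Hopf link (and more generally the $(2,n)$-torus links): there both $A\langle L_{0}\rangle$ and $A^{-1}\langle L_{1}\rangle$ reduce to single monomials $-A^{\pm4}$ and the external gap is $7$, so without the hypothesis the conclusion is false. The plan is therefore to argue the contrapositive: a separation of two or more slots should force both $\langle L_{0}\rangle$ and $\langle L_{1}\rangle$ to be monomials. Concretely, when the supports are disjoint one has $\maxdeg\langle L\rangle=\maxdeg(g)$ and $\mindeg\langle L\rangle=\mindeg(f)$, whence
\[
\breadth(\langle L\rangle)=\breadth(\langle L_{0}\rangle)+\breadth(\langle L_{1}\rangle)+(\text{gap})+1,
\]
so the assertion is equivalent to the sharp identity $\breadth(\langle L\rangle)=\breadth(\langle L_{0}\rangle)+\breadth(\langle L_{1}\rangle)+4$. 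I would establish this by controlling the top term of $g$ and the bottom term of $f$ through the all-$A$ and all-$B$ states of $L_{0}$ and $L_{1}$, using that $c$ joins two different components to pin down the relevant state-circle counts and hence to show that the two extremal coefficients survive; the non-monomial hypothesis then prevents the breadth from collapsing and forces adjacency. The main obstacle is precisely this extremal-coefficient analysis: unlike the congruence step it is not purely formal, since for a general diagram the extreme degrees of a Kauffman bracket are only bounded, not determined, by the state circles, so the argument must exploit the two-component condition to guarantee the relevant adequacy at the two ends.
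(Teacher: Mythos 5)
Your congruence step is correct and is exactly the mechanism of Lemma \ref{cancellation}: every exponent of $A\langle L_{0}\rangle$ and of $A^{-1}\langle L_{1}\rangle$ arises from a state of the diagram of $L$, so the two polynomials occupy a single residue class modulo $4$, and any gap between them has length $\equiv 3 \pmod 4$, in particular at least $3$. But this is only the lower bound, and it is the cheap half. The whole content of the proposition is the upper bound --- excluding gaps of length $7,11,\dots$ when one of the two polynomials has at least two terms --- and there your proposal contains a genuine hole rather than a proof. The ``contrapositive'' you set out to prove (a separation of two or more slots forces both $\langle L_{0}\rangle$ and $\langle L_{1}\rangle$ to be monomials) is not a reduction: it is the proposition itself restated, and you then concede that the extremal-coefficient analysis needed to establish it is ``the main obstacle.'' That concession is accurate: nothing in the hypothesis (the two arcs at $c$ lying on different components) controls the extreme states of the diagrams $L_{0}$ and $L_{1}$, which need not be adequate in any sense, so the all-$A$/all-$B$ sketch does not get started. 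Your breadth identity $\breadth(\langle L\rangle)=\breadth(\langle L_{0}\rangle)+\breadth(\langle L_{1}\rangle)+(\text{gap})+1$ is correct but is again only a reformulation of the goal.

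What is missing is a descent mechanism reducing to a case already settled, and this is where the paper's proof goes in a genuinely different direction: it inducts on the number of positive crossings between the two components meeting at $c$. Switching the crossing $c$ yields a link $K$ with $K_{0}=L_{1}$ and $K_{1}=L_{0}$, so the pair of polynomials attached to $K$ is the pair attached to $L$ with the prefactors $A^{\pm 1}$ interchanged, i.e.\ each part shifted by $A^{\mp 2}$; the induction hypothesis applies to $K$ because it has fewer positive crossings between the two components, and the only dangerous case (namely $A^{-1}\langle K_{1}\rangle$ sitting a gap of three above $A\langle K_{0}\rangle$, which after the shifts would hand $L$ a gap of seven) is then eliminated by comparing the two skein relations of Lemma \ref{jonespolynomial}, with the writhe bookkeeping of Remark \ref{simple2}, for $L$ and for $K$. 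Some device of this sort --- relating the configuration for $L$ to a strictly simpler link at the same crossing --- is indispensable; as written, your argument establishes only the congruence half of the statement.
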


\begin{proof}
We assume that $c$ is a crossing between the two components $L_{1}$ and $L_{2}$ of the link $L$. We use the second principle of induction on the number of positive crossings between the components $L_{1}$ and $L_{2}$ for some fixed orientation of the link $L$. Without loss of generality and by choosing the appropriate orientations on the components $L_{1}$ and $L_{2}$, we can assume that the crossing $c$ is positive in the link $L$. The fact that the Jones polynomials of the same link with two different orientations are related by some phase gives us the freedom to choose the orientations on the two components $L_{1}$ and $L_{2}$ without affecting the length of the gap.

From the induction hypothesis, the result holds for the link $K$ that is obtained from $L$ by switching the crossing $c$ since the number of positive crossings between the components $L_{1}$ and $L_{2}$ is smaller than the one in the link $L$ and the two polynomials $A^{-1}\langle K_{1}\rangle$ and $A\langle K_{0}\rangle$ do not consist of only one monomial as a consequence of the assumption on the link $L$. In particular, the gap if it exists between the polynomials $A^{-1}\langle K_{1}\rangle$ and $A\langle K_{0}\rangle$ is of length at most three.  Now the result follows directly if there is no gap between the polynomials $A^{-1}\langle K_{1}\rangle$ and $A\langle K_{0}\rangle$ or if $\mindeg(A\langle K_{0}\rangle) >  \maxdeg(A^{-1}\langle K_{1}\rangle)$ noting that the links $K_{0}$ and $L_{1}$ are identical and the links $K_{1}$ and $L_{0}$ are also identical. Thus we can assume that there is a gap of length three between the polynomials $A^{-1}\langle K_{1}\rangle$ and $A\langle K_{0}\rangle$ with $\mindeg(A^{-1}\langle K_{1}\rangle) >  \maxdeg(A\langle K_{0}\rangle)$.

Now we use the skein relation in Lemma \ref{jonespolynomial} at the crossing $c$ to evaluate $V_{L}(t)$ and $V_{K}(t)$. %The fact that the Jones polynomials of the same link with two different orientations are related by some phase gives us the freedom to choose the orientations on the two components $L_{1}$ and $L_{2}$ without affecting the length of the gap.
The assumption $\mindeg(A^{-1}\langle K_{1}\rangle) >  \maxdeg(A\langle K_{0}\rangle)$ is equivalent to $\mindeg(A^{-1}\langle L_{0}\rangle) >  \maxdeg(A\langle L_{1}\rangle)$. %, we can conclude that $\mindeg(V_{L_{0}}(t))> \maxdeg (t^{\frac{3e}{2}} V_{L_{1}}(t))$.
In this case, the gap between the polynomials $-t^{\frac{1}{2}}V_{L_{0}}(t)$ and $-t^{\frac{3e}{2}+1} V_{L_{1}}(t)$ in the link $L$ is of length $ (\mindeg(V_{L_{0}}(t))+\frac{1}{2}) - (\maxdeg(V_{L_{1}}(t))+\frac{3e}{2}+1)-1 = \mindeg(V_{L_{0}}(t))-\maxdeg(V_{L_{1}}(t)) - \frac{3e}{2} - \frac{3}{2}$  and the gap between the polynomials $-t^{\frac{-1}{2}}V_{L_{0}}(t)$ and $-t^{\frac{3e}{2}-1} V_{L_{1}}(t)$ in the link $K$ is of length $ (\mindeg(V_{L_{0}}(t))-\frac{1}{2}) - (\maxdeg(V_{L_{1}}(t))+\frac{3e}{2}-1)-1 = \mindeg(V_{L_{0}}(t))-\maxdeg(V_{L_{1}}(t)) - \frac{3e}{2} - \frac{1}{2}$ . Thus the result follows since the length of the gap in $L$ is smaller than the length of the gap in the link $K$.

\end{proof}

\begin{rem}\label{simple}
\begin{enumerate}
\item  Proposition \ref{new}  generalizes  the result of the first and third authors \cite[Prop.\,3.15]{QC1}. It also fixes some typos that appeared in the proof of that proposition.
\item If we combine the result of Proposition \ref{new} with the result of Corollary \,3.12 of \cite{QC1}, we conclude that the gap, if it exists, between the polynomials $A\langle L_{0}\rangle$ and $A^{-1}\langle L_{1}\rangle$ is of length three or seven. Moreover, a gap  of length seven only occurs in the case that the link has breadth of the Jones polynomial equal to two.
\end{enumerate}
\end{rem}

\begin{lemm}\label{connected}
If the connected sum of two links is quasi-alternating, then each  component is  also quasi-alternating.
\end{lemm}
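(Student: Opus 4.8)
The plan is to argue by strong induction on the determinant $\det(L)$, using two classical facts together with the recursive structure of Definition \ref{def}: first, that the determinant is multiplicative under connected sum, $\det(K_1\# K_2)=\det(K_1)\det(K_2)$; and second, that every quasi-alternating link has determinant at least one. Writing $L=K_1\# K_2$, the second fact makes the induction well founded since $\det(L)\ge 1$, and it also settles the base case: if $\det(L)=1$ then $L$ must be the unknot, because any application of the defining move at a crossing produces a link of determinant $\det(L_0)+\det(L_1)\ge 2$. Since a connected sum is the unknot only when both factors are unknots, both $K_1$ and $K_2$ lie in $\mathcal{Q}$ in this case.

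For the inductive step I would fix a quasi-alternating diagram $D$ of $L$ with distinguished crossing $c$, whose smoothings $L_0,L_1$ lie in $\mathcal{Q}$ and satisfy $\det(L_0),\det(L_1)\ge 1$ and $\det(L)=\det(L_0)+\det(L_1)$. Arranging the connected-sum sphere to meet $D$ in two arc points away from all crossings, the crossing $c$ lies inside a single factor, say $K_1$, so that its two smoothings are again connected sums $L_0=(K_1)_0\# K_2$ and $L_1=(K_1)_1\# K_2$, where $(K_1)_0,(K_1)_1$ are the smoothings of $K_1$ at $c$. Multiplicativity then gives $\det(L_0)=\det((K_1)_0)\det(K_2)$ and $\det(L_1)=\det((K_1)_1)\det(K_2)$; since these are positive integers, each of $\det(K_2),\det((K_1)_0),\det((K_1)_1)$ is at least one, which supplies condition (2) for $K_1$ at $c$. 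Dividing the relation $\det(L)=\det(L_0)+\det(L_1)$ through by $\det(K_2)$ yields $\det(K_1)=\det((K_1)_0)+\det((K_1)_1)$, which is condition (3). Finally, because $\det(L_0),\det(L_1)<\det(L)$, the induction hypothesis applied to the quasi-alternating connected sums $L_0=(K_1)_0\# K_2$ and $L_1=(K_1)_1\# K_2$ shows that $(K_1)_0$, $(K_1)_1$ and $K_2$ all belong to $\mathcal{Q}$, which is condition (1). Hence $K_1$ is quasi-alternating at $c$, and $K_2$ has already been produced in $\mathcal{Q}$, closing the induction.

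The step I expect to be the main obstacle is precisely the geometric reduction asserting that the quasi-alternating crossing $c$ can be taken to lie inside one summand, so that the two smoothings remain connected sums sharing the unchanged factor; once this compatibility between the crossing and the connected-sum sphere is in hand, the remainder is just the bookkeeping of multiplicativity and the descent on $\det$. It may be cleanest to phrase this via the Tait graph used throughout the paper: a connected-sum decomposition of $L$ corresponds to a cut vertex splitting the Tait graph as $G=G_1\cup_v G_2$, the polynomial $\Gamma_G$ factors over these blocks (whence the multiplicativity of the determinant), and the edge of $c$ lies in one block so that deleting and contracting it respects the decomposition. In writing the final argument I would also make explicit the three elementary inputs invoked above: the multiplicativity of the determinant under connected sum, the fact that a quasi-alternating link of determinant one is the unknot, and the fact that a connected sum is trivial only when both factors are trivial.
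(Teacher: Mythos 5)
Your proposal takes essentially the same approach as the paper's proof: strong induction on $\det(L)$, with the base case $\det(L)=1$ forcing $L$ (hence both factors) to be the unknot, and the inductive step placing the quasi-alternating crossing $c$ inside one summand so that both smoothings are connected sums of strictly smaller determinant, to which the induction hypothesis applies. Your write-up is in fact more complete than the paper's, which hides the determinant bookkeeping (verifying conditions (2) and (3) of Definition \ref{def} for the factor containing $c$ via multiplicativity of the determinant) behind ``the result follows,'' and dismisses the geometric compatibility of $c$ with the connected-sum decomposition as ``easy to see.''
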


\begin{proof}
Let $L= J\# K$ be a connected sum of  two links $J$ and $K$. Assume that $L$ is quasi-alternating. We shall  show that both $J$ and $K$ are quasi-alternating. We apply induction on the determinant of $L$. It is clear that the result holds if $\det(L) = 1$ since the only quasi-alternating link of determinant one is the unknot. Now, if we smooth $L$ at a crossing where it is quasi-alternating, then we obtain $L_{0}$ and $L_{1}$ that are quasi-alternating. It is easy to see that either $L_{0} = J_{0}\# K$ and $ L_{1} = J_{1}\# K$ or $L_{0} = J\# K_{0}$ and  $L_{1} = J\# K_{1}$. Since in both cases   $\det(L_0) $ and   $\det(L_1) $ are  less than  $\det(L)$, the  result follows by applying the induction hypothesis on  $L_0$ and   $L_1$.
\end{proof}

\begin{rem}\label{gapofproduct}
Let $f_{1}(t)$ and $f_{2}(t)$ be any two alternating polynomials each of which has  only one gap of length one. Then, it can be easily seen that  their product $f_{1}(t)f_{2}(t) $ has more than one gap of length one only if both of them are of  breadth equal to $2$. Otherwise, there will be either no gap or just a single gap of length one. Moreover, if one of the polynomials  has no gap then the product will have no gap.
\end{rem}

\begin{lemm}\label{lemma}
Let $L$ be a quasi-alternating link of $\breadth(V_{L}(t)) \leq 3$, then $L$ is the unknot, the Hopf link or the trefoil knot.
\end{lemm}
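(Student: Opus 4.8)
```latex
\begin{proof}[Proof proposal]
The plan is to classify quasi-alternating links by the breadth of their Jones
polynomial, using the facts that the breadth is an even integer (since
$V_{L}(t)$ lives in $\mathbb{Z}[t^{-1/2},t^{1/2}]$ but for each link the powers
that actually occur are congruent modulo one, and more usefully that breadth is
monotone under the smoothing operations) together with the recursive structure
of Definition \ref{def}. Since $\breadth(V_{L}(t))\leq 3$ and the breadth of the
Jones polynomial is always an integer, the possible values are $0,1,2,3$. I
would first dispose of the small cases: breadth $0$ forces $V_{L}(t)$ to be a
monomial, which for a quasi-alternating (hence nonsplit, via the determinant
condition) link forces $L$ to be the unknot. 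The remaining work is to show that
breadth $1,2,3$ are realized only by the Hopf link and the trefoil.

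The key step is an induction on $\det(L)$ mirroring the inductions already used
in Lemma \ref{connected} and Proposition \ref{new}. If $L$ is the unknot we are
done; otherwise choose a crossing $c$ at which $L$ is quasi-alternating, so that
$\det(L)=\det(L_{0})+\det(L_{1})$ with both summands at least $1$, and both
$L_{0},L_{1}$ quasi-alternating of strictly smaller determinant. I would then
control the breadth of $V_{L}(t)$ in terms of the breadths of $V_{L_{0}}(t)$ and
$V_{L_{1}}(t)$ via the skein relations of Lemma \ref{jonespolynomial}, using
Lemma \ref{cancellation} to guarantee that the two contributions
$A^{\epsilon}\Gamma_{G_{0}}$ and $A^{-\epsilon}\Gamma_{G_{1}}$ do not cancel.
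The no-cancellation property is what makes breadth behave predictably: the
breadth of $V_{L}(t)$ is at least the larger of the two child breadths, and the
smallness hypothesis $\breadth(V_{L}(t))\leq 3$ then severely constrains the
children, forcing each of $L_{0},L_{1}$ to have breadth at most $3$ as well so
that the induction hypothesis applies to them.

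The heart of the argument is the combinatorial bookkeeping of how the two
spanning-tree polynomials $\Gamma_{G_{0}}(A)$ and $\Gamma_{G_{1}}(A)$ fit
together. By Lemma \ref{cancellation} their supports, after the shifts by
$A^{\pm\epsilon}$, interleave without overlap, so
$\breadth(V_{L}(t))=\breadth(V_{L_{0}}(t))+\breadth(V_{L_{1}}(t))+\delta$ where
$\delta\geq 0$ measures the gap (if any) between the two blocks. When $L$ is
prime and not a $(2,n)$-torus link Theorem \ref{jones} tells us there is no gap,
but here I cannot yet invoke that since $L$ might be exactly a $(2,n)$-torus
link; indeed the Hopf link and trefoil are precisely the $(2,2)$- and
$(2,3)$-torus links. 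So I would keep track of $\delta$ explicitly and show that
$\breadth(V_{L_{0}}(t))+\breadth(V_{L_{1}}(t))\leq 3$ forces one child to be the
unknot and the other to have breadth $\leq 2$, and then iterate. The endpoints
of this descent are the unknot, the Hopf link, and the trefoil, which I would
verify directly have breadths $0$, $1$, and $2$ respectively and are the only
quasi-alternating links realizing breadth at most three.

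The main obstacle I anticipate is ensuring that the induction does not escape
into larger links: a priori a child $L_{i}$ could have larger breadth than $L$
if there were cancellation, so the entire argument rests on applying Lemma
\ref{cancellation} correctly to conclude breadth is nondecreasing along the
smoothing tree. The secondary difficulty is handling the shift exponents
$\tfrac{3e}{2}\pm 1$ in Lemma \ref{jonespolynomial}: the quantity $e$ depends on
the diagram, and I must confirm that the relative placement
$\mindeg$ versus $\maxdeg$ of the two terms always produces a breadth that is
the sum (plus the nonnegative gap), never a smaller overlapping breadth. Once
the monotonicity and additivity of breadth are pinned down, the classification
into unknot, Hopf link, and trefoil is forced by the numerics
$0+0,\ 1+0,\ 2+0,\ 1+1$ being the only ways to write a total breadth of at most
three as a sum of child breadths terminating at the unknot.
\end{proof}
```
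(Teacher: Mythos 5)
Your overall frame (induction on the determinant, using the skein relation of Lemma \ref{jonespolynomial} together with a no-cancellation argument to force $\breadth(V_{L_{0}}(t))\leq 3$ and $\breadth(V_{L_{1}}(t))\leq 3$ so that the induction hypothesis applies to the children) coincides with the paper's proof up to that point. But your way of finishing has two genuine gaps. First, the central bookkeeping identity you rely on, $\breadth(V_{L}(t))=\breadth(V_{L_{0}}(t))+\breadth(V_{L_{1}}(t))+\delta$ with $\delta\geq 0$, is false: Lemma \ref{cancellation} says the coefficients do not cancel, \emph{not} that the two shifted supports are disjoint, and in general they overlap. Concretely, the figure-eight knot is quasi-alternating at a crossing whose two smoothings are the Hopf link and the trefoil (its Tait graph is a triangle with one doubled edge; deleting the simple edge leaves a graph with two spanning trees, contracting it leaves three), so the children have breadths $2$ and $3$ while the parent has breadth $4<2+3$. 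What no cancellation actually gives is only the inequality $\breadth(V_{L}(t))\geq \max\bigl(\breadth(V_{L_{0}}(t)),\breadth(V_{L_{1}}(t))\bigr)$, which is the direction both you and the paper need for the induction, but it can never yield additivity, so your enumeration of splittings $0+0$, $1+0$, $2+0$, $1+1$ does not constrain $L$ the way you claim.

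Second, and more fundamentally, your descent has no mechanism for identifying $L$ itself at the last step. Knowing that both children are the unknot, the Hopf link or the trefoil constrains $V_{L}(t)$ through the skein relation, but a link is not determined by its Jones polynomial, so the assertion that ``the endpoints of this descent are the unknot, the Hopf link, and the trefoil'' does not follow from breadth numerics alone: a priori $L$ could be some other quasi-alternating link of small determinant whose Jones polynomial happens to have breadth at most three. The paper closes exactly this gap by a different move: from the induction hypothesis it deduces $\det(L)=\det(L_{0})+\det(L_{1})\leq 6$, and then invokes the classification of quasi-alternating links with small determinant \cite{G,LS}; there are only finitely many such links with determinant at most six, and an inspection shows that among them only the unknot, the Hopf link and the trefoil have breadth at most three. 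Some appeal to this classification (or a substitute for it) is indispensable, and it is the one ingredient your proposal is missing; the false additivity claim is what let you believe you could avoid it.
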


\begin{proof}
We prove this result by induction on the determinant of the link $L$. It is known that the result holds if the determinant is one since the only quasi-alternating link of determinant one is the unknot. Now we assume that the result holds for any quasi-alternating link of determinant less than the determinant of the link $L$. This means that the result holds for the quasi-alternating links $L_{0}$ and $L_{1}$.

It is easy to see that there is no cancellation in the two terms in the formulas of Lemma \ref{jonespolynomial} in the case of $L$ being quasi-alternating link at the crossing $c$ as a result of the facts that the polynomials $V_{L_{0}}(t)$ and $V_{L_{1}}(t)$ are alternating, $\det(L) = \det(L_{0})+ \det(L_{1})$ and $\det(L) = |V_{L}(-1)|$. Thus, according to these facts, we should  have $\breadth(V_{L_{0}}(t)) \leq 3$ and $ \breadth(V_{L_{1}}(t)) \leq 3$. Otherwise, we get $\breadth(V_{L}(t)) > 3$. Notice that  the set of monomials of nonzero coefficients in  $V_{L}(t)$ and $V_{L_{0}}(t)$ must be either equal, one of them is a subset of the other or disjoint with the maximum difference between  their degrees less than or equal to three. By the  induction hypothesis,  $L_{0}$ is either the unknot, the Hopf link or the trefoil knot and the  same applies for the link $L_{1}$. Therefore, we obtain $\det(L) = \det(L_{0}) + \det(L_{1}) \leq 6$. By the classification of quasi-alternating links with small determinants \cite{G,LS},  there are only finitely many quasi-alternating links of determinant less than or equal to six. Among these links,  only the unknot, the Hopf link and  the trefoil  knot have  breadth less than or equal to three.
\end{proof}

Based on the above discussion, we are now ready to prove Theorem \ref{jones}.
\begin{proof}[Proof of the Main Theorem]
To prove the claim in Theorem \ref{jones}, we need to show that the polynomial $\Gamma_{G}(A)$ has no gap of length bigger than three whenever  $L$ is not a $(2,n)$-torus link. We can assume that $G$ and its dual consist of more than one simple cycle. Otherwise the given link is a $(2,n)$-torus link. Furthermore, we can assume that the breadth of $\Gamma_{G}(A)$ is bigger than eight, otherwise the result follows directly as a consequence of Lemma \ref{lemma}. This last assumption also implies that one of the two polynomials $\Gamma_{G_{0}}(A)$ and $\Gamma_{G_{1}}(A)$ consists of more than one monomial.

We apply double induction on the determinant or on the minimal number of deleted edges required to obtain a spanning tree from the given graph $G$ or its dual. In case the  determinant is equal to one or the minimal number of deleted edges required to obtain a spanning tree is equal to one, the result  follows directly since the link is either the  unknot or is a $(2,n)$-torus link.

Now, we assume that the result holds for any quasi-alternating link where the  minimal number of deleted edges required to obtain a spanning tree is less than the minimal number of deleted edges required to obtain a spanning tree of the graph $G$ of the link $L$ or the determinant is  less than the determinant of the link $L$. The induction hypothesis implies that the result holds for the quasi-alternating links $L_{0}$ and $L_{1}$ if they are prime simply since $\det(L_{0}) < \det(L)$ and $\det(L_{1}) < \det(L)$. Now we have the following cases to consider:
\begin{enumerate}
\item As a result of \cite[Lemma\,A]{Ki}, we can assume that at least one of the two links $L_{0}$ or $L_{1}$ is prime. Without loss of generality, we can assume that $L_{1}$ is prime and $L_{0}$ is not prime. Now we consider two subcases:
\begin{enumerate}
\item If $L_{1}$ is not a $(2,n)$-torus link. As a result of $L_{0}$ being not prime, we conclude that $L_{0}$ consists of a connected sum of two prime links and these two components of $L_{0}$ will be quasi-alternating according to Lemma \ref{connected}. In the case that one of these two components is not a $(2,n)$-torus link, then the result follows directly as a consequence of Remark \ref{gapofproduct}, Lemma \ref{cancellation} and the induction hypothesis on the components of the link $L_{0}$ and Remark \ref{simple}(2). In the case of the two components are the $(2,n)$-torus links, then the minimal number of deleted edges  required to obtain a spanning tree of the graph $G$ of the link $L$ is two and the fact that the minimal number of deleted edges required to obtain a spanning tree from $G_{0}$ is equal to that of $G$. In this case the result follows since the quasi-alternating link is a 3-strand pretzel link. In such a case, the link satisfies the required property as a result of \cite[Theorem\,3.10]{CQ}.
\item If $L_{1}$ is a  $(2,n)$-torus link, then the minimal number of deleted edges  required to obtain a spanning tree of the graph $G$ of the link $L$ is two  and the fact that the minimal number of deleted edges required to obtain a spanning tree from $G_{1}$ is equal to that of $G$ minus one. In this case the result follows since the quasi-alternating link is a 3-strand pretzel link.  In such a case, the link satisfies the required property as a result of \cite[Theorem\,3.10]{CQ}.
\end{enumerate}

%Therefore for the other cases, we can assume that both $L_{0}$ and $L_{1}$ are prime.

\item If $L_{0}$ and $L_{1}$ are both prime and $\Gamma_{G_{0}}(A)$ and $\Gamma_{G_{1}}(A)$ have no gap of length bigger than three, then the result follows directly as a consequence of Remark \ref{simple}(2) and Lemma \ref{cancellation}.

\item  If $L_{0}$ and $L_{1}$ are both prime and both $\Gamma_{G_{0}}(A)$ and $\Gamma_{G_{1}}(A)$ have a gap of length seven, then both links $L_{0}$ and $L_{1}$ have minimal number of deleted edges required to obtain a spanning tree from the graphs $G_{0}$ and $G_{1}$ is one. This implies, if such a case exists, that the minimal number of deleted edges required to obtain a spanning tree from the graph $G$  is at most one and hence the result follows since such a link will be a $(2,n)$-torus link.
\item If $L_{0}$ and $L_{1}$ are both prime and one of $\Gamma_{G_{0}}(A)$ or $\Gamma_{G_{1}}(A)$ has a gap of length seven but not both. In this case, we can assume that the minimal number of deleted edges required to obtain a spanning tree of the graph $G_{1}$ is one and this implies that the minimal number of deleted edges required to obtain a spanning tree of the graph $G_{0}$ is two. Therefore, this implies that the minimal number of deleted edges required to obtain a spanning tree of the graph $G$ is two and in this case the result follows since the quasi-alternating link is a 3-strand pretzel link. In such a case, the link satisfies the required property as a result of \cite[Theorem\,3.10]{CQ}.
\end{enumerate}
\end{proof}

\section{Applications of the Main Theorem}
First, we note that  Theorem \ref{jones} is an extension of the well-known result of Thistlethwaite  on the Jones polynomial of alternating links  \cite[Theorem\,1(iv)]{Th}. It also establishes \cite[Conjecture\,2.3]{CQ}. We  shall now  discuss more  applications and consequences of the main result.
%In the first application, we obtain the result of \cite[Theorem\,1(iv)]{T} from the fact that any alternating link is quasi-alternating.
%\begin{coro}\label{alternating}
%If $L$ is a prime alternating link that is not a $(2,n)$-torus link, then the Jones polynomial $V_{L}(t)$ has no gap.
%\end{coro}
%\begin{rem}
%The result in Theorem \ref{jones} generalizes this result from the class of alternating links to the class of quasi-alternating links and hence establishes \cite[Conjecture\,2.3]{CQ}.
%\end{rem}

In  \cite{QC}, it was conjectured that the breadth of the Jones polynomial  of any quasi-alternating link is less than or equal to its determinant. The following corollary shows  that this conjecture holds.

\begin{coro}\label{jonesbreadth}
Let $L$ be a quasi-alternating link, then $\breadth(V_{L}(t)) \leq \det(L)$. Moreover, the equality holds only if $L$ is a $(2,n)$-torus link or if it is a connected sum of Hopf links.
\end{coro}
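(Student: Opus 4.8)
The plan is to deduce Corollary~\ref{jonesbreadth} directly from Theorem~\ref{jones} together with the gap results established for the polynomial $\Gamma_{G}(A)$. The key observation is the general inequality relating breadth, gaps and determinant. For any alternating Laurent polynomial $f(t)=\sum_{k=n}^{m}a_kt^k$ with no gap, each integer exponent between $n$ and $m$ carries a nonzero coefficient, so evaluating at $t=-1$ yields $|f(-1)|\geq m-n+1 > \breadth(f)$, because the coefficients alternate in sign and hence add up in absolute value rather than cancel. Since $\det(L)=|V_L(-1)|$ and $\breadth(V_L(t))=\breadth(\Gamma_G(A))/4$ up to the standard normalization, a gap-free alternating Jones polynomial immediately forces $\breadth(V_L(t))<\det(L)$ for such links.

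The first step is therefore to reduce to the prime case: I would invoke the multiplicativity of the Jones polynomial and of the determinant under connected sum, observing that both $\breadth$ and $\det$ are additive over connected summands (breadth adds, and $\det(J\#K)=\det(J)\det(K)$), so that the inequality for a non-prime link follows from the inequality for each prime factor, provided one tracks how strict inequalities combine. Next, for a prime quasi-alternating link $L$ that is not a $(2,n)$-torus link, Theorem~\ref{jones} guarantees $V_L(t)$ has no gap, and the elementary counting argument above gives the strict inequality $\breadth(V_L(t))<\det(L)$. For the remaining prime cases, namely the $(2,n)$-torus links, I would compute directly: the $(2,n)$-torus link has $\det = n$ and its Jones polynomial has breadth exactly $n-1$ (it has a single gap pattern), and in fact equality $\breadth=\det$ is attained precisely here, confirming the ``only if'' clause.

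The main obstacle will be the equality analysis rather than the inequality itself. To establish that equality $\breadth(V_L(t))=\det(L)$ forces $L$ to be a $(2,n)$-torus link or a connected sum of Hopf links, I would argue that equality demands the evaluation $|V_L(-1)|$ to equal $\breadth(V_L(t))$ exactly, which by the alternating-sign counting forces the polynomial to have precisely one nonzero coefficient at each of exactly $\det(L)$ consecutive exponents minus one extra term, i.e.\ essentially forces almost all coefficients to be $\pm1$ and pins down the gap structure tightly. Here I would lean on Remark~\ref{simple}(2), which limits gaps to lengths three or seven, and on Lemma~\ref{lemma}, which classifies the small-breadth quasi-alternating links as the unknot, Hopf link and trefoil. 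The Hopf link realizes $\breadth=1=\det$, and its connected sums realize equality by additivity; combined with the $(2,n)$-torus computation, these are the only ways to saturate the bound. The delicate point is ruling out any other prime quasi-alternating link from attaining equality, which follows because the strict inequality already holds for all primes outside the $(2,n)$-torus family, so the connected-sum analysis only leaves Hopf-link summands as possible equality cases among non-torus inputs.
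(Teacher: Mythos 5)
Your proposal is correct and follows essentially the same route as the paper: the prime case via Theorem \ref{jones} together with the alternating-coefficient count $\det(L)=|V_{L}(-1)|\geq \breadth(V_{L}(t))$, and the non-prime case via additivity of the breadth versus multiplicativity of the determinant under connected sum, with the equality analysis reduced to the $(2,n)$-torus and Hopf-link-summand cases. One slip worth fixing: the $(2,n)$-torus link has $\breadth(V_{L}(t))=n=\det(L)$, not $n-1$ (your own following clause, that equality is attained there, is the correct statement); this inconsistency is harmless to the ``only if'' direction, which needs only the strict inequality for prime links that are not $(2,n)$-torus links.
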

\begin{proof}
Let us first assume that  $L$ is prime quasi-alternating. In this case and according to Theorem \ref{jones}, the Jones polynomial $V_{L}(t)$ consists of $k+1$ distinct consecutive monomials of coefficients $a_{1},a_{2},\ldots,a_{k+1}$ such that at most one of them is zero. This  implies that $\breadth(V_{L}(t)) = k$. Now the result follows directly since the Jones polynomial is alternating which implies that $\det(L) = |V_{L}(-1)| = |a_{1}| + |a_{2}| + \ldots + |a_{k+1}|\geq k = \breadth(V_{L}(t))$. If $L$ is not prime,  then  Lemma \ref{connected} implies that the components of the connected sum are  quasi-alternating. The inequality  holds since the breadth of the Jones polynomial is additive while  the determinant is multiplicative under the  connected sum operation.
\end{proof}

\begin{rem}
Corollary \ref{jonesbreadth} implies that there are only finitely many values of the breadth  of the Jones polynomial of quasi-alternating links of a given determinant. This result can be also obtained as a consequence of \cite[Theorem\,1.3]{Q}. Also this property in Corollary \ref{jonesbreadth}, which is known  to be true for  alternating links, represents a simple obstruction criteria for a link to be quasi-alternating.
\end{rem}

In addition, Theorem \ref{jones} can be used to establish \cite[Conjecture\,4.12]{QC1}.
\begin{coro}
Let $L$ be a prime quasi-alternating link that is not a $(2,n)$-torus link, then the differential grading in Khovanov homology of $L$  has no gap.
\end{coro}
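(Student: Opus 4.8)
The plan is to reduce the Khovanov statement about the differential grading directly to the Jones polynomial result already established in Theorem \ref{jones}. The crucial input is \cite[Proposition\,3.10]{QC1}, which the introduction flags as the bridge between the two objects: it relates the distribution of nonzero groups in the differential (homological) grading of the Khovanov homology to the distribution of nonzero coefficients of the Jones polynomial. First I would recall precisely what ``no gap in the differential grading'' means: writing the Khovanov homology $\mathit{Kh}^{i,j}(L)$ with $i$ the homological grading, the claim is that the set of homological degrees $i$ for which $\bigoplus_j \mathit{Kh}^{i,j}(L) \neq 0$ forms a set of consecutive integers, i.e. there is no index $i_0$ with nontrivial homology below and above it but trivial homology at $i_0$.

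The key step is to invoke the relationship between the two gradings. The unnormalized or graded Euler characteristic of Khovanov homology recovers (a normalization of) the Jones polynomial, so each monomial $a_k t^{k}$ of $V_L(t)$ is an alternating sum of ranks of Khovanov groups along a diagonal. For an arbitrary link this alone would not forbid a homological gap, since cancellation in the Euler characteristic could hide nonzero groups. This is exactly where \cite[Proposition\,3.10]{QC1} does the work: for quasi-alternating links the Khovanov homology is thin (supported on a single diagonal $j - 2i = \sigma(L) \pm 1$ up to the standard sign/signature shift), so the homological grading $i$ and the polynomial grading are in bijective, monotone correspondence. Consequently a gap in the homological support of $\mathit{Kh}$ would force a gap of the same length in the support of the Jones polynomial, and conversely. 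I would state this correspondence as the one sentence that converts ``$V_L(t)$ has no gap'' into ``the differential grading has no gap.''

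Having set up the dictionary, the proof is then immediate: by Theorem \ref{jones}, since $L$ is prime quasi-alternating and not a $(2,n)$-torus link, $V_L(t)$ has no gap, meaning its nonzero coefficients occupy consecutive powers of $t^{1/2}$. Transporting this through the monotone grading correspondence of \cite[Proposition\,3.10]{QC1} yields that the homological degrees supporting $\mathit{Kh}(L)$ are consecutive, i.e. the differential grading has no gap, which is the desired conclusion.

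The main obstacle I anticipate is not the logical structure but pinning down the exact normalization in \cite[Proposition\,3.10]{QC1}: one must check that the linear change of variables sending the polynomial grading to the homological grading has slope $\pm 1$ (so that a gap of length $s$ maps to a gap of length $s$, rather than being rescaled or collapsed), and that the thinness of quasi-alternating Khovanov homology genuinely rules out the Euler-characteristic cancellation that could otherwise decouple the two notions of gap. Once that correspondence is quoted correctly, no further computation is needed and the corollary follows from Theorem \ref{jones} in a single line.
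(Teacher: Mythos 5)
Your proposal is correct and follows exactly the paper's own route: the paper's proof is the one-line observation that the claim follows from Theorem \ref{jones} together with \cite[Proposition\,3.10]{QC1}, which is precisely the reduction you carry out (your additional discussion of thinness and the grading correspondence simply unpacks what that cited proposition provides).
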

\begin{proof}
The claim follows directly from  Theorem \ref{jones} and \cite[Proposition\,3.10]{QC1}.
\end{proof}

\begin{coro}
Let $L$ be a quasi-alternating link. Then the Jones polynomial of $L$ has more than one gap if and only if   $L$ is a connected sum  of Hopf links.
\end{coro}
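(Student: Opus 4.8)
The plan is to prove both implications of this biconditional separately, leveraging the characterization of gaps established in the main theorem and the preceding lemmas. The statement concerns a general quasi-alternating link $L$, so the natural first move is to reduce to the prime case via the connected sum structure. By Lemma \ref{connected}, if $L$ is a non-split connected sum $L = L^{(1)} \# L^{(2)} \# \cdots \# L^{(r)}$ of prime links, then each prime factor $L^{(i)}$ is itself quasi-alternating. Since the Jones polynomial is multiplicative under connected sum (up to a unit), the gap structure of $V_L(t)$ is governed by the product of the $V_{L^{(i)}}(t)$, and I can analyze it through Remark \ref{gapofproduct}.

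For the reverse implication, suppose $L$ is a connected sum of $k$ Hopf links. The Jones polynomial of a single Hopf link has breadth $2$ and exhibits a single gap of length one. By Remark \ref{gapofproduct}, the product of two such polynomials (each alternating, each of breadth $2$ with one gap of length one) has more than one gap of length one. Iterating this, a connected sum of $k \geq 2$ Hopf links produces a Jones polynomial with more than one gap. The key point is that the breadth-$2$ hypothesis in Remark \ref{gapofproduct} is exactly satisfied by the Hopf link factors, so the gaps do not get filled in by cancellation when the polynomials are multiplied.

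For the forward implication, I would argue contrapositively: assuming $L$ is a quasi-alternating link that is \emph{not} a connected sum of Hopf links, I want to show $V_L(t)$ has at most one gap. First, if $L$ is prime, then by Theorem \ref{jones} it has no gap unless it is a $(2,n)$-torus link, and the torus link case yields at most one gap; either way we are done. If $L$ is not prime, write it as a connected sum of prime quasi-alternating factors. The crucial observation from Remark \ref{gapofproduct} is that the product of two alternating polynomials has \emph{more than one} gap of length one only when \emph{both} factors have breadth exactly $2$; if any factor has breadth larger than $2$ (hence no gap, being a non-$(2,n)$-torus prime factor) or only a single gap that is not of the critical type, the product has at most one gap. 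By Lemma \ref{lemma}, the only quasi-alternating prime links of breadth at most $3$ are the unknot, the Hopf link, and the trefoil; among these, only the Hopf link has breadth exactly $2$. Thus the only way to accumulate more than one gap in the product is for every prime factor to be a Hopf link, which means $L$ is a connected sum of Hopf links---contradicting our assumption.

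The main obstacle I anticipate is the careful bookkeeping in the forward direction when combining factors of mixed breadth. I need to verify that introducing even a single prime factor that is not a Hopf link (for instance a trefoil, which has breadth $3$ and a gap, or any larger non-torus prime factor, which has no gap by Theorem \ref{jones}) destroys the possibility of more than one gap in the full product. This requires the precise statement of Remark \ref{gapofproduct}: multiplying by a polynomial of breadth $>2$ or by one with no gap either removes the extra gaps or fills them in, leaving at most one. Making this argument airtight---especially handling the trefoil factor and ensuring that the alternating-sign property prevents spurious cancellations that could create new gaps---is where I expect the real work to lie, whereas the reverse implication should follow almost immediately from Remark \ref{gapofproduct} applied inductively.
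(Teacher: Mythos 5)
Your proposal is correct and follows essentially the same route as the paper: the paper's proof is a one-line appeal to exactly the ingredients you use---multiplicativity of the Jones polynomial under connected sum, Theorem \ref{jones}, Lemma \ref{connected}, Lemma \ref{lemma}, and Remark \ref{gapofproduct}. Your write-up simply makes explicit the two directions and the bookkeeping (reduction to prime factors, the breadth-$2$ characterization of the Hopf link, and the torus/non-torus dichotomy for the factors) that the paper leaves implicit.
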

\begin{proof}
The result follows directly from the fact that the Jones polynomial is multiplicative under the  connected sum together with   the results in Theorem \ref{jones}, Lemma \ref{connected}, Lemma \ref{lemma} and Remark \ref{gapofproduct}.
\end{proof}

We include the following example that explains how the obstruction obtained from Theorem \ref{jones} can be used to show that a given link is not quasi-alternating.
\begin{ex}
The Jones polynomial of the Kanenobu knot $K(p,q)$ defined in \cite{Ka1} is given by
\begin{align*}
V_{K(p, q)}(t)  = & (-1)^{p+q}(t^{p+q-4}-2t^{p+q-3}+ 3t^{p+q-2}-4t^{p+q-1}+ 4t^{p+q}-4t^{p+q+1}+ \\ & 3t^{p+q+2}
- 2t^{p+q+3}+ t^{p+q+4}) + 1.
\end{align*}

According to \cite[Corollary\,3.3]{QC}, all Kanenobu knots with $|p|+|q|\geq 19$ are not quasi-alternating. This result can be sharpened using the obstruction in Theorem \ref{jones} by stating  that  $K(p,q)$ are not quasi-alternating whenever  $|p|+|q|\geq 19$ or $|p+q|>6$.

\end{ex}

Finally, we enclose this paper with  the following conjecture which  is motivated by the result in Lemma \ref{lemma} and the fact that there are only finitely many alternating links with a given breadth. This last fact  is a consequence of  the fact that the  breadth of the Jones polynomial is equal to the crossing number of any alternating link \cite{K, Mu, Th}.

\begin{conj}\label{newconj}
There are only finitely many quasi-alternating links with a given breadth.
\end{conj}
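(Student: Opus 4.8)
The plan is to reduce the statement to prime links and then to an upper bound on the determinant in terms of the breadth. First I would dispose of the non-prime case. If $L$ is a connected sum, then Lemma \ref{connected} shows that each prime factor is quasi-alternating, and the breadth of the Jones polynomial is additive under connected sum. By Lemma \ref{lemma} there is no quasi-alternating link of breadth one, so every nontrivial prime quasi-alternating factor has breadth at least two (the minimum being realized by the Hopf link). Hence a quasi-alternating link of breadth $b$ has at most $b/2$ nontrivial prime factors. Granting finiteness in the prime case for all breadths $\le b$, there are then only finitely many multisets of prime factors of total breadth $b$, and the non-prime case follows.

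Among prime quasi-alternating links I would treat the $(2,n)$-torus links directly: their Jones polynomials are explicit and their breadth is a strictly increasing function of $n$, so only finitely many of them share a given breadth. For a prime quasi-alternating link $L$ that is not a $(2,n)$-torus link, Theorem \ref{jones} guarantees that $V_{L}(t)$ has no gap; hence, exactly as in the proof of Corollary \ref{jonesbreadth}, it consists of $b+1$ consecutive monomials with at most one vanishing coefficient, and $\det(L)=|V_{L}(-1)|=\sum_{i}|a_{i}|$. Consequently, finiteness for a fixed breadth $b$ is equivalent to a bound $\det(L)\le f(b)$, after which one invokes a finiteness statement for quasi-alternating links of bounded determinant in the spirit of the classifications in \cite{G,LS} and of \cite[Theorem\,1.3]{Q}.

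A structural input I would exploit is a monotonicity of the breadth under the recursive definition. Since Lemma \ref{cancellation} forbids cancellation between the two terms of the skein relation of $\Gamma_{G}(A)$, and hence (as used in the proof of Lemma \ref{lemma}) between the two terms $-t^{1/2}V_{L_{0}}(t)$ and $-t^{\frac{3e}{2}+1}V_{L_{1}}(t)$ of the Jones skein relation, the support of $V_{L}(t)$ is the union of shifted copies of the supports of $V_{L_{0}}(t)$ and $V_{L_{1}}(t)$. Therefore $\breadth(V_{L_{0}}(t))\le\breadth(V_{L}(t))$ and $\breadth(V_{L_{1}}(t))\le\breadth(V_{L}(t))$. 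This permits an induction along the quasi-alternating reduction tree in which the breadth never increases as one passes to $L_{0}$ and $L_{1}$, so every quasi-alternating link of breadth $b$ is assembled from quasi-alternating links of breadth at most $b$.

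The main obstacle is the bound $\det(L)\le f(b)$ for prime non-torus links, equivalently an a priori bound on the individual coefficients $|a_{i}|$. The no-gap theorem controls the \emph{length} of the support but not the size of the coefficients; moreover, since the Khovanov homology of a quasi-alternating link is thin, its total rank equals $\det(L)$ while being supported on only two diagonals, so thinness by itself does not bound any single coefficient. I would therefore attack the bound through the spanning-tree model: the coefficient of each power of $A$ in $\Gamma_{G}(A)$ is a signed count of spanning trees of the Tait graph $G$ of prescribed total activity, and $\breadth(\Gamma_{G}(A))=4\,\breadth(V_{L}(t))$. The aim would be to bound the cycle rank of $G$, namely the minimal number of deleted edges needed to reach a spanning tree that already drives the induction in the proof of Theorem \ref{jones}, by a function of $\breadth(\Gamma_{G}(A))$, and then to bound the number of spanning trees, hence $\det(L)$, in terms of this cycle rank together with the non-separability and planarity of $G$. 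Controlling the parallel-edge bundles of $G$, which inflate the spanning-tree count without enlarging the breadth exactly as in the $(2,n)$-torus case, is where I expect the argument to be most delicate, and it is presumably why the statement remains a conjecture.
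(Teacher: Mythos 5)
The statement you are trying to prove is Conjecture~\ref{newconj}: the paper itself offers no proof of it and explicitly leaves it open, presenting it only as motivated by Lemma~\ref{lemma} and by the alternating case (where breadth equals crossing number). So there is no argument in the paper to compare yours against; what can be assessed is whether your reduction scheme is sound and whether it closes the problem. It does not, and you concede as much in your final sentence, so your text is a research plan rather than a proof. The sound parts are the peripheral reductions: the connected-sum step (Lemma~\ref{connected}, additivity of breadth, and Lemma~\ref{lemma} showing the minimal nonzero breadth is two, so at most $b/2$ nontrivial factors), the direct treatment of $(2,n)$-torus links, and the breadth monotonicity $\breadth(V_{L_i}(t))\leq\breadth(V_L(t))$ coming from the absence of cancellation in the skein relation. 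None of these touches the core difficulty.

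There are two genuine gaps, and the second is more serious than you indicate. First, the bound $\det(L)\leq f(\breadth(V_L(t)))$ for prime non-torus quasi-alternating links is precisely the hard content of the conjecture: Theorem~\ref{jones} controls only the length of the support of $V_L(t)$, not the size of any coefficient, and your proposed spanning-tree attack (bounding the cycle rank of $G$ by the breadth, then the number of spanning trees by the cycle rank) founders exactly on parallel-edge bundles, as you note. Second, even granting such a bound, your last step invokes ``a finiteness statement for quasi-alternating links of bounded determinant'' --- but that statement is exactly Greene's Conjecture~3.8 in \cite{G}, which is open. The classifications in \cite{G,LS} cover only small determinants (the paper uses them only up to determinant six in the proof of Lemma~\ref{lemma}), and \cite[Theorem\,1.3]{Q} bounds the number of possible \emph{values} of the breadth for a given determinant, not the number of links. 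Moreover, the paper points out that the implication runs in the opposite direction: Conjecture~\ref{newconj} together with Corollary~\ref{jonesbreadth} would \emph{imply} Greene's conjecture. So your plan reduces the conjecture to two statements, one of which is the admitted obstacle and the other of which is at least as open as the conjecture itself; as written, the argument is circular in spirit rather than incomplete in a repairable way.
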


A positive solution of this conjecture not only implies a positive solution of Conjecture 3.8 in \cite{G} based on the result of Corollary \ref{jonesbreadth}, but it also generalizes this property from the class of alternating links to the class of quasi-alternating links.

At the end, it is worth pointing out that Conjecture 1.1 in \cite{QQJ} suggests the crossing number as a lower bound of the determinant for any quasi-alternating link. This lower bound is sharper than the one introduced in Corollary \ref{jonesbreadth} as a result of the known fact that the crossing number is an upper bound of the breadth of the Jones polynomial of any link. This conjecture has been verified for many classes of quasi-alternating links, but to the best of our knowledge, the conjecture is still open. It is not too hard to see that such conjecture implies Conjecture 3.8 in \cite{G} and Conjecture \ref{newconj} above.

\end{document}